\renewcommand{\phi}{\varphi}
\renewcommand{\theta}{\vartheta}
\renewcommand{\epsilon}{\varepsilon}
\newcommand{\R}{\mathbb{R}}
\newcommand{\Z}{\mathbb{Z}}
\newcommand{\N}{\mathbb{N}}
\newcommand{\I}{\mathbb{I}}
\newcommand{\Null}{\mathcal{N}}
\newcommand{\Es}{\mathscr{E}}
\newcommand{\Hs}{\mathscr{H}}
\DeclareMathOperator{\rot}{rot}
\providecommand{\abs}[1]{\left\lvert#1\right\rvert} 
\newcommand{\clos}[1]{\overline{#1}}
\newcommand*{\dd}{\mathop{}\!\mathrm{d}}
\newtheorem{theorem}{Theorem}
\newtheorem{lemma}[theorem]{Lemma}
\newtheorem{prop}[theorem]{Proposition}
\newtheorem{corol}[theorem]{Corollary}
\theoremstyle{definition}
\newtheorem{remark}[]{Remark}
\numberwithin{equation}{section}
\title{Existence of a periodic solution for superlinear second order ODEs}
\author{\textsc{Paolo Gidoni}\\
\small
	\textit{Institute of Information Theory and Automation, Czech Academy of Sciences},\\\small
\textit{Pod vodárenskou veží 4, CZ-182 08 Prague, Czech Republic,} \texttt{gidoni@utia.cas.cz}
}
\date{}
\begin{document}

	\maketitle
	
\begin{abstract}
We prove a necessary and sufficient condition for the existence of a $T$-periodic solution for the time-periodic second order differential equation $\ddot{x}+f(t,x)+p(t,x,\dot x)=0$, where $f$ grows superlinearly in $x$ uniformly in time, while $p$ is bounded. Our method is based on a fixed-point theorem which uses the rotational properties of the dynamics. 
\end{abstract}	{\small
{ \bf Keywords:} periodic solutions, superlinear differential equations, rotation number.\\
{ \bf MSC 2020:} primary 34C25, secondary 37C25, 37E45.}

\section{Introduction}

In this paper we discuss the existence of a periodic solution for the second order superlinear equation
\begin{equation} \label{eq:introODE}
	\ddot{x}+f(t,x)+p(t,x,\dot x)=0
\end{equation}
where $f$ and $p$ are Carathéodory functions $T$-periodic in time, and the uniqueness of solution for the associated Cauchy problems is assumed. The function $f$ satisfies the superlinear growth condition
\begin{equation*}
	\lim_{\abs{x}\to+\infty}\frac{f(t,x)}{x}=+\infty \qquad\text{uniformly in $t\in[0,T]$\,,}
\end{equation*}
whereas $\abs{p(t,x,y)}<\gamma_p(t)+C_p\abs{x}$ for some positive constant $C_p$ and  integrable function $\gamma_p$.

In the case of an autonomous superlinear term $f=f(x)$, the existence of a $T$-periodic solution for \eqref{eq:introODE} has been extensively studied. The first contributions to the topic are due to Morris (e.g., \cite{Mor1,Mor2}) and Ehrmann \cite{Ehr}. The first focused, in a series of papers, on Duffing's equation $f=x^3$ and $p=p(t)$, while the second studied the problem for $p=p(t)$ under some additional symmetry conditions. After various contribution to the problem by several authors (cf.~the survey in \cite{CaMaZa}), a key step forward is due to 
Fučík and Lovicar \cite{Fucik}, who proved the existence of a periodic solution in the general case $f=f(x)$, $p=p(t)$ . Their strategy was later improved by Struwe \cite{Stru}, allowing a generic term $p=p(t,x,\dot x)$. We also mention \cite{CaMaZa} for an alternative proof of this last result based on a continuation method, with generalizations to planar and higher dimensional systems.

If the superlinear term $f(t,x)$ is nonautonomous, the situation is much more problematic and  unexplored. The key issue is the global existence of solutions, which provides the well-posedness of the Poincaré time-map for any initial data, thus facilitating a fixed-point approach.

In the autonomous case $f=f(x)$, global existence of solutions can be obtained via an a priori  estimate of the energy $\dot x^2/2+V(x)$, where $V$ is a primitive of $f$.
Such strategy cannot be applied, in general, to the nonautonomous case $f(t,x)$, where global existence may possibly not hold true. An example for which a solution of \eqref{eq:introODE}  explodes to infinity in finite time is presented in \cite{CofUll}. 
The aim of this paper is to deal with this issue, developing a topological approach compatible with the possible non-continuability in $[0,T]$ of some solutions of \eqref{eq:introODE}.

Our main result (Theorem \ref{th:chall}) is the existence of a $T$-periodic solution for \eqref{eq:introODE}, under the only additional assumption \ref{hyp:Nexist}, requiring that for every $\bar t\in[0,T]$  the solution of the Cauchy problem $x(\bar t)=\dot x(\bar t)=0$ can be continued to $[0,T]$.  We notice that \ref{hyp:Nexist} does not imply global existence of solutions: for instance, it is satisfied also by the counterexample to global existence in \cite{CofUll}, since this admits the constant solution $x=0$. Indeed, as pointed out in Remark~\ref{rem:a5calc}, \ref{hyp:Nexist} can be verified, for instance, by a growth condition on $f$ valid only in a suitable bounded region.

In Corollary \ref{cor:chall_star}, we show that \ref{hyp:Nexist} can be weakened to a \emph{necessary and sufficient} condition \ref{hyp:Nexist_star} for the existence of a $T$-periodic solution. More precisely, the Cauchy condition for which we require continuability can be generalized to $x(\bar t)=a(\bar t), \dot x(\bar t)=\dot a(\bar t)$ for some smooth, $T$-periodic function $a$. As we will discuss in Remark~\ref{rem:a5idea}, such condition has a clear topological meaning in light of the desirable rotational properties of the dynamics.
We are not aware  whether a counterexample to \ref{hyp:Nexist_star} is possible for \eqref{eq:introODE}, although, if it exists, we expect it to be rare and pathological, cf.~Remark~\ref{rem:a5conj}.

In the Hamiltonian case $p\equiv 0$, the existence of a (first) periodic solution implies the existence of infinitely many additional $T$-periodic solutions, as we show in Theorem \ref{th:ham}. This fact was first noticed by Hartman \cite{Hart} and Jacobowitz \cite{Jacob}.  They considered the case $f(t,0)=0$, hence assuming $x\equiv 0$ as a first periodic solution, and obtained infinitely many others, rotating around the first one, by iterate applications of the Poincaré--Birkhoff Theorem.  We refer to \cite{DingZan,Bos12} for generalizations on the plane and to \cite{BosOrt,FonSfe,FonGid} for the extension to systems of differential equations.
Outside the Hamiltonian case, such results are in general no longer true, since the constant null solution may be the unique $T$-periodic solution, cf.~e.g.~\cite[Eq.~(1.13)]{Hart}.

To conclude, we observe that the existence of a periodic solution is entangled with the more general issue concerning boundedness, or lack thereof, of solutions for \eqref{eq:introODE}.  The problem was first considered by Littlewood  \cite{Lit} for Duffing's equation with a general bounded forcing $p(t)$, and has inspired an ample literature. For a periodic forcing, the boundedness of all solutions have been proved for polynomial generalizations of Duffing's equation \cite{DieZeh}.
However, also for this issue, the situation is more complex and unexplored in the case of a general non-autonomous $f=f(t,x)$.   Clearly, boundedness of all solutions cannot be expected, as shown by the non-continuability example in \cite{CofUll} previously mentioned. On the other hand, in \cite{Ver} the existence of infinite bounded solutions has been proved  for $p\equiv 0$ and without the periodicity assumption in $t$. Since periodic solutions are obviously bounded, our main result could be seen also as a tiny contribution in this direction.


\section{Notation and preliminary results} \label{sec:prel}

We denote with $\clos U$ the closure of a set $U$ and with $\partial U$ its boundary. Open intervals are denoted with $(a,b)$, closed ones with $[a,b]$, and the mixed case $(a,b]$ is defined accordingly.

Most of our analysis will be performed on the plane $\R^2$. We denote with $\I_{\R^2}$ the identity map in $\R^2$ and with $0_{\R^2}$ the origin $(0,0)$. We introduce clockwise polar coordinates $(\theta, r)$, according to the change of coordinates  $\Phi\colon\clos\Hs\to \R^2$
\begin{equation}\label{eq:Fpolar}
	(x,y)=\Psi(\theta,r)=(r\cos\theta, -r \sin \theta)\,,
\end{equation}
where $\Hs=\R\times (0,+\infty)$ and $\clos\Hs=\R\times [0,+\infty)$ denote the open and closed halfplanes, respectively.
We denote with $B_R\subset \R^2$ the open ball with radius $R>0$ centred in the origin $0_{\R^2}$, namely $B_R=\Psi(\R\times[0,R) )$. For any vector $v\in\R^n$, $\abs v$ denotes its Euclidean norm.

Let us consider  two  sets $E\subseteq D\subseteq \R^2$  and the evolution described by a continuous function $\phi\colon [0,T]\times D\to \R^2$, such that $\phi(0,z)=z$ for every $z\in D$. Let us suppose that
\begin{equation}\label{eq:lift_cond}
	\phi(t,z)\neq 0_{\R^2} \qquad\text{for every $(t,x)\in[0,T]\times E$}
\end{equation}
namely that each $\phi$-orbit of the points in $E$ does not cross the origin of $\R^2$.
For convenience's sake, we introduce the set $\Null\subset \R^2$ as
\begin{equation}\label{eq:defNull}
	\Null:=\{x\in D : \phi(t,x)=0 \quad\text{for some $t\in[0,T]$}\}\,,
\end{equation}
so that the assumption \eqref{eq:lift_cond} can be written as $E \cap \Null=\emptyset$. 

Let us write $\Es=\Psi^{-1}(E)\subset \Hs$. Since $(\Psi,\Hs)$ is a covering space of $\R^2\setminus\{0\}$,  by the \emph{homotopy lifting property} (cf. e.g. \cite[Chapter 1.3]{Hat}) we define $\Phi\colon [0,T]\times\Es\to\Hs$ as the unique continuous lift in clockwise polar coordinates  of the map $\phi$. Namely, $\Phi=\Phi(t,(\theta,r))$ is the unique continuous map such that for every $\eta\in \Es$ and $t\in[0,T]$ holds:
\begin{equation}
\Phi(0,\eta)=\eta \qquad \text{and} \qquad	\Psi(\Phi(t,\eta))=\phi(t,\Psi(\eta))\,.
\end{equation}
As a consequence of the periodicity of $\Psi$ in $\theta$, for every $(t,(\theta,r))\in[0,T]\times\Es$ we have
\begin{equation}\label{eq:translinvar}
	\Phi(t,(\theta+2\pi,r))=\Phi(t,(\theta,r))+(2\pi,0)\,.
\end{equation}
We denote the two components of $\Phi$ as $\Phi=(\Phi^\theta, \Phi^r)$.

Always assuming \eqref{eq:lift_cond}, we define the rotation associated to the point $z\in E$ during the interval $[0,T]$ as
\begin{equation}\label{eq:rotdef}
	\rot_\phi (z) := \frac{\Phi^\theta(T,\eta_z)-\Phi^\theta(0,\eta_z)}{2\pi}
\end{equation}
for any $\eta_z\in \Psi^{-1}(z)$. The definition is well-posed by \eqref{eq:translinvar}.
Accordingly, we write $\rot_\phi (E) :=\{\rot_\phi(z) : z\in E \}$. 

Our main result is based on the following fixed-point theorem.
\begin{theorem} \label{th:deg1}
	Let $U\subset\R^2$ be an open bounded set with $0_{\R^2}\in U$. Assume that  $\phi\colon[0,T] \times\clos U\to\R^2$ is a continuous map  such that $\phi(0,z)=z$ for every $z\in \clos U$ and $\Null\cap \partial U=\emptyset$. If
	\begin{equation}\label{eq:deg1rot}
	\rot_\phi(\partial U)\cap \Z=\emptyset
	\end{equation}
	then $\deg(\phi(T,\cdot)-\I_{\R^2},U,0)=1$, and in particular $\phi(T,\cdot)$ has a fixed point in~$U$.
\end{theorem}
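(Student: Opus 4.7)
The plan is to use the homotopy invariance of the Brouwer degree to connect $\phi(T,\cdot)-\I_{\R^2}$ to the map $-\I_{\R^2}$, whose degree on $U$ at $0_{\R^2}$ equals $+1$ (since $0_{\R^2}\in U$ and $\det(-I)=+1$). This will both give the stated value of the degree and, via the solution property, yield the fixed point.

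The key preliminary step is a geometric reformulation of the rotational hypothesis. Fix $z\in\partial U$, a lift $\eta_z=(\theta_z,r_z)\in\Psi^{-1}(z)$, and set $\Phi(T,\eta_z)=(\theta^*,r^*)$. Because $z\in\partial U$ and $\Null\cap\partial U=\emptyset$, both $r_z>0$ and $r^*>0$. From \eqref{eq:rotdef}, $\rot_\phi(z)\in\Z$ is equivalent to $\theta^*-\theta_z\in 2\pi\Z$, and substituting into \eqref{eq:Fpolar} this is in turn equivalent to $\phi(T,z)=(r^*/r_z)\,z$, i.e.\ to $\phi(T,z)=\lambda z$ for some $\lambda>0$. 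Combining the hypothesis $\rot_\phi(\partial U)\cap\Z=\emptyset$ with $\Null\cap\partial U=\emptyset$ (which rules out $\lambda=0$), I deduce that $\phi(T,z)\neq \lambda z$ for every $\lambda\geq 0$ and every $z\in\partial U$.

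I then consider the affine homotopy
\begin{equation*}
H_s(z):=(1-s)\phi(T,z)-z,\qquad s\in[0,1],
\end{equation*}
which interpolates between $H_0=\phi(T,\cdot)-\I_{\R^2}$ and $H_1=-\I_{\R^2}$. A zero of $H_s$ on $\partial U$ with $s\in[0,1)$ would force $\phi(T,z)=z/(1-s)$, a positive-scalar-multiple relation excluded by the previous step; a zero of $H_1$ would require $z=0_{\R^2}\notin\partial U$. Hence the homotopy is admissible on $\partial U$, and invariance yields
\begin{equation*}
\deg(\phi(T,\cdot)-\I_{\R^2},U,0)=\deg(-\I_{\R^2},U,0)=1.
\end{equation*}

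The only nontrivial point is the first step, namely translating the integer-spectrum condition on $\rot_\phi$ into the clean nonalignment statement that $\phi(T,z)$ is never a nonnegative scalar multiple of $z$ on $\partial U$; this must be done carefully with the lift $\Phi$ and the covering map $\Psi$. Once that characterization is in hand, the remainder is routine planar degree theory.
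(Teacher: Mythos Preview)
Your proof is correct and follows essentially the same route as the paper. The paper makes the identical key observation---that $\rot_\phi(z)\in\Z$ is equivalent to $\phi(T,z)=\lambda z$ for some $\lambda>0$---and then invokes the Poincaré--Bohl Theorem as a black box; your affine homotopy $H_s(z)=(1-s)\phi(T,z)-z$ is precisely the standard proof of Poincaré--Bohl, so you have simply unpacked that citation.
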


Theorem~\ref{th:deg1} can be obtained, as we show below, as a corollary of the Poincaré--Bohl fixed-point Theorem, cf.~for instance \cite{FonGid0}. We refer to \cite{Gid21} for an alternative proof of Theorem~\ref{th:deg1} within a more general framework connecting rotational properties and Brouwer's degree.

\begin{theorem}[Poincaré--Bohl]
	Let $U\subset\R^d$ be an open bounded set with $0_{\R^d}\in U$. Assume that $P\colon \clos U\to \R^2$ is a continuous map such that
	\begin{equation}\label{eq:PBohl}
		P(z)\neq \lambda z \qquad \text{for every $z\in \partial U$ and every $\lambda\in[1,+\infty)$}\,.
	\end{equation}
Then $\deg(P-\I_{\R^d},U,0)=(-1)^d$, and in particular $P$ has a fixed point in~$U$.
\end{theorem}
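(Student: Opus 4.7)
The plan is to reduce $\deg(P-\I_{\R^d},U,0)$ to the computable quantity $\deg(-\I_{\R^d},U,0)$ through an admissible homotopy, and then extract the fixed point from the nonvanishing of the degree.

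Concretely, I would consider the straight-line homotopy $H\colon[0,1]\times\clos U\to\R^d$ defined by
\begin{equation*}
H(s,z)=sP(z)-z,
\end{equation*}
so that $H(1,\cdot)=P-\I_{\R^d}$ and $H(0,\cdot)=-\I_{\R^d}$. The crucial step is to verify that $H(s,z)\neq 0$ for every $(s,z)\in[0,1]\times\partial U$, as this is precisely the admissibility condition required for homotopy invariance of Brouwer's degree. For $s=0$ one has $H(0,z)=-z$, which vanishes only at the origin; since $0_{\R^d}\in U$ and $U$ is open, the origin does not belong to $\partial U$. For $s\in(0,1]$, the equation $H(s,z)=0$ rewrites as $P(z)=\lambda z$ with $\lambda=1/s\in[1,+\infty)$, which is exactly the situation excluded by hypothesis \eqref{eq:PBohl}. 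Thus $H$ is an admissible homotopy, and homotopy invariance yields
\begin{equation*}
\deg(P-\I_{\R^d},U,0)=\deg(-\I_{\R^d},U,0).
\end{equation*}

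It then remains to compute the degree of $-\I_{\R^d}$. Since $-\I_{\R^d}$ is a linear isomorphism and its unique preimage of $0$ is the origin, which lies in $U$, the standard formula for the degree of a $C^1$ map at a regular value gives
\begin{equation*}
\deg(-\I_{\R^d},U,0)=\operatorname{sign}\det(-\I_{\R^d})=(-1)^d.
\end{equation*}
Combining the two equalities gives the claimed value $(-1)^d$. Finally, since $(-1)^d\neq 0$, the solvability property of Brouwer's degree produces a point $z\in U$ with $(P-\I_{\R^d})(z)=0$, i.e.\ a fixed point of $P$.

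There is no serious obstacle here: the entire argument hinges on the observation that condition \eqref{eq:PBohl} is designed precisely to keep the pencil $z\mapsto sP(z)-z$ away from $0$ on $\partial U$ for all $s\in[0,1]$. The only thing to handle with care is the endpoint $s=0$, where the nonvanishing comes from $0_{\R^d}\in U$ rather than from \eqref{eq:PBohl} itself.
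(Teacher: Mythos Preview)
Your proof is correct and is the standard argument for the Poincaré--Bohl theorem via homotopy invariance of Brouwer's degree. Note, however, that the paper does not actually prove this statement: it is quoted as a classical result with a reference to \cite{FonGid0}, and is used as a tool to derive Theorem~\ref{th:deg1}. So there is no ``paper's own proof'' to compare against; your argument simply supplies what the paper takes for granted.
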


\begin{proof}[Proof of Theorem \ref{th:deg1}]
	Under the assumptions of Theorem \ref{th:deg1}, we notice that $\phi(T,z)=\lambda z$ for some $\lambda>0$ if and only if $\rot_\phi(z)\in\Z$. Hence \eqref{eq:deg1rot} implies \eqref{eq:PBohl} and therefore Theorem \ref{th:deg1} follows straightforwardly by the Poincaré--Bohl Theorem for $d=2$.
\end{proof}

To conclude this section, we state a result on continuous dependence of solutions for ordinary differential equations, cf.~for instance \cite[Ch.~1, $\mathsection$1, Th. 6]{Filippov}. As usual, the dot $\dot{}$ denotes the derivative with respect to the time variable $t$.

First, we recall that a function $F=F(t,x)\colon \R\times \R^d$ satisfies the \emph{Carathéodory property} if the following conditions hold:
\begin{itemize}
	\item for every $x\in\R^d$ the function $F(\cdot,x)$ is measurable in $t$;
	\item for almost every $t\in\R$ the function $F(t,\cdot)$ is continuous in $x$;
	\item for every compact set $D\subset \R\times \R^d$ there exists a Lebesgue integrable function $m_D(t)$ such that $\abs{F(t,x)}\leq m_D(t)$ for every $(t,x)\in D$.
\end{itemize}
Given a function $F(t,z)$ satisfying the Carathéodory property, solutions of the differential equation $\dot z=F(t,z)$ are intended in the usual weaker sense \cite{Filippov,Hale}.

\begin{theorem}[Continuous dependence]\label{th:contdep}
	Let $h=h(t,z)\colon\R\times\R^k\to\R^k$ satisfy the Carathéodory property. Assume that the solution  $z(t;t_0,z_0)$ of the problem
	\begin{equation*}
		\begin{cases}
			\dot z=h(t,z)\\
			z(t_0)=z_0
		\end{cases}
	\end{equation*}
	exists on the interval $[t_a,t_b]$ and is unique. Then there exists $\delta>0$ such that, for every $(\bar t,\bar z)$ such that $\abs{t_0-\bar t}+\abs{z_0-\bar z}<\delta$, the solution $z(t;\bar t,\bar z)$ of the Cauchy problem defined by $z(\bar t)=\bar z$ exists on the interval $[t_a,t_b]$. Moreover, $z(s;\bar t,\bar z)$ is a continuous function of $(s;\bar t,\bar z)$ at $(t;t_0,z_0)$.
\end{theorem}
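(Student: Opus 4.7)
The plan is to combine an a priori bound along the known reference solution with a subsequence-and-uniqueness argument, proving existence on $[t_a,t_b]$ for nearby initial data and continuous dependence in a single stroke.

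First I would pick a compact tubular neighbourhood of the graph of $z(\cdot;t_0,z_0)$, e.g.
\[
D_\rho:=\{(t,w)\in[t_a,t_b]\times\R^k:\abs{w-z(t;t_0,z_0)}\leq\rho\},
\]
and use the Carathéodory property to obtain $m_{D_\rho}\in L^1([t_a,t_b])$ with $\abs{h(t,w)}\leq m_{D_\rho}(t)$ on $D_\rho$. For $(\bar t_n,\bar z_n)\to(t_0,z_0)$, standard Carathéodory local existence yields solutions $z_n:=z(\cdot;\bar t_n,\bar z_n)$ defined on maximal subintervals $J_n\subseteq[t_a,t_b]$ on which the graph of $z_n$ stays in $D_\rho$. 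On $J_n$, the integral identity
\[
z_n(t)=\bar z_n+\int_{\bar t_n}^{t} h(\tau,z_n(\tau))\,\dd\tau
\]
together with $\abs{h(\tau,z_n(\tau))}\leq m_{D_\rho}(\tau)$ yields both uniform boundedness and, via absolute continuity of $\int m_{D_\rho}$, equicontinuity of the family $\{z_n\}$.

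The crucial step is to show that $J_n=[t_a,t_b]$ for $n$ large. Arguing by contradiction, suppose there exist $s_n\in[t_a,t_b]$ with $\abs{z_n(s_n)-z(s_n;t_0,z_0)}=\rho$. By Ascoli--Arzelà, along a subsequence $z_n\to z^*$ uniformly on $[t_a,s^*]$, where $s^*:=\lim s_n$. Passing to the limit in the integral equation---justified by Carathéodory continuity of $h$ in the state variable combined with dominated convergence against $m_{D_\rho}$---shows that $z^*$ solves $\dot z=h(t,z)$, $z(t_0)=z_0$ on $[t_a,s^*]$. Uniqueness forces $z^*\equiv z(\cdot;t_0,z_0)$, contradicting $\abs{z^*(s^*)-z(s^*;t_0,z_0)}=\rho$.

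Once existence on all of $[t_a,t_b]$ is secured for nearby data, joint continuity at $(t;t_0,z_0)$ follows by the same Ascoli--Arzelà and uniqueness scheme applied on the full interval: every sequence $(\bar t_n,\bar z_n)\to(t_0,z_0)$ produces equicontinuous $z_n$ whose only possible uniform cluster point in $C([t_a,t_b];\R^k)$ is $z(\cdot;t_0,z_0)$, so the whole family converges uniformly, and joint continuity in $(s;\bar t,\bar z)$ at $(t;t_0,z_0)$ is immediate (using $\abs{z(s;\bar t,\bar z)-z(t;t_0,z_0)}\leq\abs{z_n(s)-z(s;t_0,z_0)}+\abs{z(s;t_0,z_0)-z(t;t_0,z_0)}$ and continuity of the reference trajectory). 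The main technical obstacle is the passage to the limit inside the integrand: the Carathéodory hypothesis provides continuity of $h(t,\cdot)$ only for a.e.\ $t$, so the $L^1$-domination by $m_{D_\rho}$ is essential in order to invoke dominated convergence and conclude that the limit $z^*$ actually solves the differential equation.
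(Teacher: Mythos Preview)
The paper does not supply its own proof of this statement: it is quoted as a preliminary result with a reference to \cite[Ch.~1, \S1, Th.~6]{Filippov}. Your sketch follows the classical tubular-neighbourhood plus Ascoli--Arzel\`a plus uniqueness argument that one finds in standard treatments (Filippov, Hale), so there is nothing to compare against the paper itself.

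Two small points worth tightening in your write-up. First, the contradiction hypothesis should be stated along a subsequence: it is not that \emph{every} $n$ produces an exit time $s_n$, but that infinitely many do. Second, when you invoke Ascoli--Arzel\`a ``on $[t_a,s^*]$'', note that the $z_n$ are a priori defined only on their respective $J_n$, which need not contain $[t_a,s^*]$; the usual fix is either to restrict first to a common subinterval $[\bar t_n,s_n]$ (or its left analogue) and then enlarge, or to extend each $z_n$ continuously outside $J_n$ by the boundary value so that the family lives in $C([t_a,t_b];\R^k)$. Neither issue affects the validity of the approach, but both should be made explicit in a complete proof.
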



\section{Statement of the main result} \label{sec:chall}

We study the second order ordinary differential equation
\begin{equation}\label{eq:Fode}
	\ddot{x}+f(t,x)+p(t,x,\dot x)=0
\end{equation}
under the following assumptions:
\begin{enumerate}[label=\textup{(A\arabic*)}]
	\item \label{hyp:reg} the functions $f\colon\R\times \R \to\R$ and $p\colon \R\times \R^2\to\R$ both satisfy the Carathéodory property and are $T$-periodic in the first variable $t$;
	\item \label{hyp:uniqueness} For every $(t_0,x_0,y_0)\in[0,T]\times \R^2$ there is uniqueness of solution for the Cauchy problem $x(t_0)=x_0$, $\dot x(t_0)=y_0$ associated to \eqref{eq:Fode};
	\item \label{hyp:growth} $\displaystyle\lim_{\abs{x}\to+\infty}\frac{f(t,x)}{x}=+\infty$  uniformly in $t\in[0,T]$;
	\item  \label{hyp:plim} there exist a positive constant $C_p$ and a positive function $\gamma_p\in L^1([0,T],\R)$ such that
	\begin{equation*}
		\abs{p(t,x,y)}<\gamma_p(t)+C_p\abs{x} \qquad\text{for every $(t,x,y)\in\R^3$\,;}
	\end{equation*}
	\item \label{hyp:Nexist} for every $\bar t\in[0,T]$, the solution of the Cauchy problem
	\begin{equation*}
		\begin{cases}
			\ddot{x}+f(t,x)+p(t,x,\dot x)=0\\
			x(\bar t)=\dot{x}(\bar t)=0
		\end{cases}
	\end{equation*}
	can be continued to the whole interval $[0,T]$.
\end{enumerate}

The main result of the paper is the following.
\begin{theorem}\label{th:chall}
	If assumptions \ref{hyp:reg},\ref{hyp:uniqueness},\ref{hyp:growth},\ref{hyp:plim} and \ref{hyp:Nexist} hold, then Equation \eqref{eq:Fode} admits at least one $T$-periodic solution.
\end{theorem}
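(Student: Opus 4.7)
The plan is to apply Theorem \ref{th:deg1} to the Poincaré time-$T$ map of the planar first-order system $\dot x = y$, $\dot y = -f(t,x) - p(t,x,y)$ equivalent to \eqref{eq:Fode}. Denote by $\phi(t, z_0)$ the solution starting at $z_0 = (x_0, y_0)$ at $t = 0$; a $T$-periodic solution of \eqref{eq:Fode} is exactly a fixed point of $\phi(T,\cdot)$. By \ref{hyp:reg}, \ref{hyp:uniqueness}, \ref{hyp:Nexist} applied at $\bar t = 0$, and Theorem \ref{th:contdep}, the set $D = \{z_0 \in \R^2 : \phi(\cdot, z_0) \text{ extends to } [0,T]\}$ is an open neighbourhood of $0_{\R^2}$ on which $\phi$ is continuous. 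It suffices to exhibit an open bounded $U$ with $0_{\R^2} \in U$, $\overline U \subset D$, $\Null \cap \partial U = \emptyset$, and $\rot_\phi(\partial U) \cap \Z = \emptyset$.

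The analytic input is a rotation estimate from \ref{hyp:growth} and \ref{hyp:plim}. In the clockwise polar coordinates of \eqref{eq:Fpolar}, a direct computation yields
\begin{equation*}
\dot\theta = \frac{y^2 + x f(t,x) + x\, p(t,x,y)}{x^2 + y^2}.
\end{equation*}
Assumption \ref{hyp:growth} gives $x f(t,x) \geq \omega(|x|) x^2$ with $\omega(s) \to +\infty$ uniformly in $t$, while \ref{hyp:plim} dominates $|xp|/r^2$ by $\gamma_p(t)/r + C_p$. Splitting cases according to whether $|x|$ or $|y|$ carries most of $r^2 = x^2 + y^2$ shows that $\dot\theta$ admits an arbitrarily large integrable lower bound once $r$ exceeds a suitable threshold. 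Consequently, for each integer $n \geq 1$ there exists $R_n > 0$ such that any solution of \eqref{eq:Fode} whose orbit in the phase plane remains outside $B_{R_n}$ throughout $[0,T]$ accumulates more than $n$ clockwise rotations around the origin.

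The decisive and most delicate step is the construction of $U$. The natural candidate is
\begin{equation*}
U_M := \{z_0 \in D : |\phi(t, z_0)| < M \text{ for all } t \in [0,T]\},
\end{equation*}
which is open by continuous dependence and, for $M$ exceeding $\max_{t \in [0,T]} |\phi(t, 0_{\R^2})|$, contains $0_{\R^2}$. A standard compactness/extension argument using the Carathéodory bound for the right-hand side on $\overline{B_M}$ yields $\overline{U_M} \subset D$, and every $z_0 \in \partial U_M$ has an orbit that touches $\{|z| = M\}$ without leaving $\overline{B_M}$. For $M \geq R_n$ such orbits must spend appreciable time in the annulus $\{R_n \leq |z| \leq M\}$ and hence accumulate at least $n$ rotations, so $\min \rot_\phi(\partial U_M)$ is forced upward as $M$ grows. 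The plan is then to invoke a continuity/intermediate-value argument in $M$ to pin down a value for which $\rot_\phi(\partial U_M) \subset (k, k+1)$ for some integer $k$. The main obstacle lies exactly in this selection: $U_M$ may be disconnected, $\partial U_M$ may contain points of $\Null$ (orbits that reach $\{|z| = M\}$ after passing through the origin), and $\rot_\phi$ need not depend monotonically on $M$. Ruling out these pathologies for a sufficient set of $M$ requires exploiting the uniformity in $t$ of \ref{hyp:growth} and the sublinear bound \ref{hyp:plim}; this is the technical heart of the proof. Once such an $M$ is chosen, all hypotheses of Theorem \ref{th:deg1} are satisfied on $U = U_M$, yielding a fixed point of $\phi(T, \cdot)$ in $U$ and therefore the desired $T$-periodic solution of \eqref{eq:Fode}.
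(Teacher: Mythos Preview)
Your setup and the rotation estimate (your analogue of the paper's Lemma~\ref{lemma:stimarot}) are sound, but the construction of $U$ contains two genuine gaps that you flag but do not close.

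First, the assertion that orbits starting on $\partial U_M$ ``spend appreciable time in the annulus $\{R_n\le |z|\le M\}$ and hence accumulate at least $n$ rotations'' is unjustified. A point $z_0\in\partial U_M$ has an orbit that stays in $\overline{B_M}$ and touches $\{|z|=M\}$, but nothing prevents it from spending almost the entire interval $[0,T]$ deep inside $B_{R_n}$, where your angular-speed lower bound is unavailable. The paper confronts exactly this obstacle in Lemma~\ref{lemma:rotelast}: it shows that \emph{every} initial point with $|z_0|>R_n$ has rotation $>n$, regardless of whether its orbit later re-enters small radii. The key device is a time-reversal change of variables $(\hat x,\hat y,\hat t)=(-x,y,T-t)$, which transforms the backward flow into a system of the same type (still satisfying \ref{hyp:reg}--\ref{hyp:Nexist}), so that one can combine the forward rotation estimate with continuity of the extended rotation $\rho$ (allowed to take the value $+\infty$ at blow-up) and a compactness argument on $[0,T]\times\overline{B_{\widetilde R_n}}$. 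Without this, you cannot conclude that $\min\rot_\phi(\partial U_M)\to\infty$.

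Second, even granting that conclusion, selecting $M$ by an intermediate-value argument so that $\rot_\phi(\partial U_M)\subset(k,k+1)$ does not work as stated: $\rot_\phi(\partial U_M)$ is a set, and there is no reason its diameter should shrink below $1$ or that it should ever miss all integers as $M$ varies. The paper avoids this entirely by choosing $U$ as a \emph{sublevel set of the rotation itself}: it first takes a bounded open $U_1$ with $\Null\subset U_1\subset\overline{U_1}\subset U_0$, picks $\bar n\in\N$ exceeding $\sup\rot_\phi(\partial U_1)$, and sets
\[
U=U_1\cup\Bigl\{z\in\R^2\setminus\Null:\rho(T;0,z)<\bar n+\tfrac{1}{2}\Bigr\}.
\]
Continuity of $\rho$ (Lemma~\ref{lemma:myrotcont}) makes $U$ open, Lemma~\ref{lemma:rotelast} makes it bounded, and by construction $\rot_\phi(\partial U)=\{\bar n+\tfrac{1}{2}\}$, so \eqref{eq:deg1rot} is automatic. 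This is the missing idea: define the domain through the rotation, rather than hope the rotation behaves well on a geometrically defined boundary.
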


Notice that, as a corollary of Theorem \ref{th:chall}, it is actually possible to weaken assumption \ref{hyp:Nexist}, obtaining a necessary and sufficient condition, as follows.

\begin{corol}\label{cor:chall_star}
	Suppose that \ref{hyp:reg},\ref{hyp:uniqueness},\ref{hyp:growth},\ref{hyp:plim} hold. Then  
	\begin{enumerate}[label=\textup{(A5$^*$)}]
		\item there exists a $T$-periodic function $a\colon \R\to \R$, with absolutely continuous derivative $\dot a(t)$, such that for every $\bar t\in[0,T]$, the Cauchy problem
		\begin{equation*}
			\begin{cases}
				\ddot{x}+f(t,x)+p(t,x,\dot x)=0\\
				x(\bar t)=a(\bar t),\quad \dot{x}(\bar t)=\dot a(\bar t)
			\end{cases}
		\end{equation*}
		admits a solution continuable to the whole interval $[0,T]$; \label{hyp:Nexist_star}
	\end{enumerate}	
	is a necessary and sufficient condition for the existence  of a $T$-periodic solution for Equation \eqref{eq:Fode}.
\end{corol}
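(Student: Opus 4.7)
The plan is to reduce the corollary to Theorem~\ref{th:chall} through the change of unknown $u(t) = x(t) - a(t)$, which recentres the Cauchy data in \ref{hyp:Nexist_star} around the origin and thus produces an equation to which the main theorem applies.

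\emph{Necessity.} If $x^*$ is a $T$-periodic solution of \eqref{eq:Fode}, then choosing $a=x^*$ in \ref{hyp:Nexist_star} works: for every $\bar t\in[0,T]$, by \ref{hyp:uniqueness} the solution of the Cauchy problem $x(\bar t)=a(\bar t)$, $\dot x(\bar t)=\dot a(\bar t)$ is exactly $x^*$ itself, which by $T$-periodicity is defined on all of $[0,T]$. Note that $\dot x^*$ is absolutely continuous because $\ddot x^* = -f(t,x^*)-p(t,x^*,\dot x^*) \in L^1([0,T])$ by \ref{hyp:reg} and \ref{hyp:plim}.

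\emph{Sufficiency.} Given $a$ as in \ref{hyp:Nexist_star}, I would set $u = x - a$ and note that $x$ solves \eqref{eq:Fode} on some interval if and only if $u$ solves
\begin{equation*}
\ddot u + \tilde f(t,u) + \tilde p(t,u,\dot u) = 0
\end{equation*}
with $\tilde f(t,u) := f(t,u+a(t))$ and $\tilde p(t,u,v) := p(t,u+a(t),v+\dot a(t)) + \ddot a(t)$. The plan is then to verify that $\tilde f$ and $\tilde p$ satisfy \ref{hyp:reg}--\ref{hyp:Nexist} for the transformed equation and apply Theorem~\ref{th:chall} to recover a $T$-periodic $u^*$, so that $x^* := u^* + a$ is the desired $T$-periodic solution of \eqref{eq:Fode}.

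The checks are routine, using that $a$ is bounded (being continuous and $T$-periodic), $\dot a$ is bounded (being absolutely continuous and $T$-periodic), and $\ddot a \in L^1_{\mathrm{loc}}(\R)$, and all three are $T$-periodic. Carathéodory and $T$-periodicity \ref{hyp:reg} and uniqueness \ref{hyp:uniqueness} transfer immediately. For the superlinearity \ref{hyp:growth}, setting $y=u+a(t)$ and using $|a(t)|\le \|a\|_\infty$, one has $\tilde f(t,u)/u = (f(t,y)/y) \cdot (y/(y-a(t)))$, and both factors tend to $+\infty$ and to $1$ respectively as $|u|\to+\infty$, uniformly in $t$. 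For \ref{hyp:plim},
\begin{equation*}
|\tilde p(t,u,v)| \le |\ddot a(t)| + \gamma_p(t) + C_p\|a\|_\infty + C_p|u|\,,
\end{equation*}
so the new integrable majorant is $\tilde\gamma_p(t) := |\ddot a(t)| + \gamma_p(t) + C_p\|a\|_\infty$, with the same constant $C_p$. Finally, the Cauchy datum $u(\bar t)=\dot u(\bar t)=0$ corresponds precisely to $x(\bar t)=a(\bar t)$, $\dot x(\bar t)=\dot a(\bar t)$, so \ref{hyp:Nexist_star} is literally \ref{hyp:Nexist} for the transformed equation.

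The only step requiring some care is the verification of \ref{hyp:plim} and \ref{hyp:reg} for $\tilde p$: one needs $\ddot a$ to contribute only an $L^1$-integrable, $u$-independent term, which is guaranteed by the hypothesis that $\dot a$ is absolutely continuous and $T$-periodic. Nothing else in the argument should pose a real obstacle, since all assumptions are preserved by the affine translation $x \mapsto x-a$ under the mild regularity imposed on $a$.
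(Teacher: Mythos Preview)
Your proof is correct and follows the same approach as the paper: the change of variable $u=x-a$ reduces \ref{hyp:Nexist_star} to \ref{hyp:Nexist} for the transformed equation, after which Theorem~\ref{th:chall} applies. You are in fact more careful than the paper, which omits the $\ddot a(t)$ term in its definition of $\tilde p$ and does not spell out the verification of \ref{hyp:reg}--\ref{hyp:plim}; your inclusion of $\ddot a$ in $\tilde p$ and the explicit checks (especially that $\ddot a\in L^1$ by absolute continuity of $\dot a$) are the right way to handle it.
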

\begin{proof}
	To see that \ref{hyp:Nexist_star} is sufficient, we  consider the time-dependent change of variable $\tilde x(t)=x(t)-a(t)$. Setting $\tilde f(t,u)=f(t,u+a(t))$ and $\tilde p(t,u,w)=p(t,u+a(t),w+\dot a(t))$ we can apply Theorem \ref{th:chall} to
	\begin{equation}\label{eq:Fode_trasl}
		\ddot{\tilde x}+\tilde f(t,\tilde x)+\tilde p(t,\tilde x,\dot{\tilde x})=0\,.
	\end{equation}
	The $T$-periodic solution $\tilde x^*$ of \eqref{eq:Fode_trasl} thus obtained corresponds to a $T$-periodic solution $x^*=\tilde x^*+a(t)$ of \eqref{eq:Fode}.
	
	On the other hand, if \eqref{eq:Fode} admits a $T$-periodic solution $\bar x(t)$, condition \ref{hyp:Nexist_star} is trivially satisfied for $a(t)=\bar x(t)$.
\end{proof}

In the unperturbed case $p\equiv 0$,  Equation \eqref{eq:Fcauchy} define an Hamiltonian structure on the phase plane, and the existence of a (first) periodic solution leads to the existence of infinitely many additional $T$-periodic solutions.  We present now such a result, assuming for simplicity a slightly more regular framework than above.

\begin{theorem}\label{th:ham}
Let  $f\colon\R\times \R \to\R$ be continuous, $T$-periodic in the first variable $t$ and continuously differentiable in the second variable $x$; moreover, set $p\equiv 0$. If \ref{hyp:uniqueness},\ref{hyp:growth} and \ref{hyp:Nexist_star} hold, then Equation \eqref{eq:Fode} has infinitely many $T$-periodic solutions.
\end{theorem}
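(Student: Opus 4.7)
The plan is to reduce to the case where $\tilde x \equiv 0$ is a $T$-periodic solution of a shifted Hamiltonian equation, and then apply a Poincaré--Birkhoff-type argument to produce $T$-periodic solutions of arbitrarily large rotation number around the origin, yielding infinitely many distinct solutions. First I would invoke Corollary~\ref{cor:chall_star}, which is applicable since $p \equiv 0$ and all the required hypotheses hold, to obtain one $T$-periodic solution $\bar x$. Performing the translation $\tilde x(t) = x(t) - \bar x(t)$, the equation becomes
\[
\ddot{\tilde x} + \tilde f(t, \tilde x) = 0, \qquad \tilde f(t, u) := f(t, u + \bar x(t)) - f(t, \bar x(t)),
\]
which is still Hamiltonian, has $\tilde x \equiv 0$ as $T$-periodic solution, and, since $\bar x$ is bounded, inherits the superlinear growth \ref{hyp:growth}. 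Producing infinitely many $T$-periodic solutions of this shifted equation is equivalent to doing so for the original one.

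Next I would estimate rotation numbers for the Poincaré map $\phi(T,\cdot)$ in the phase plane $(\tilde x, \dot{\tilde x})$, taking the origin as the pole of the clockwise polar coordinates from Section~\ref{sec:prel}. On a sufficiently small circle $\partial B_{\rho_1}$, Theorem~\ref{th:contdep} applied to $\tilde x \equiv 0$ guarantees that the Poincaré map is defined on all of $\clos B_{\rho_1}$, and continuity of the lift makes $\rot_\phi$ uniformly bounded there by some integer $N_1$. On large circles, the classical superlinear-oscillator argument (as developed for instance in \cite{Fucik,Stru}) shows that for every $N \in \N$ there exists $R_N > \rho_1$ such that any orbit starting on $\partial B_{R_N}$ that survives to time $T$ satisfies $\rot_\phi \ge N$.

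For each integer $N \ge N_1 + 1$, I would then invoke a version of the Poincaré--Birkhoff theorem (such as the variants in \cite{FonSfe,FonGid}) on the annulus between $\partial B_{\rho_1}$ and $\partial B_{R_N}$. The Hamiltonian structure of the unperturbed equation guarantees that $\phi(T,\cdot)$ is area-preserving on its domain of definition, while the rotation numbers on the two boundaries straddle $N$; this should yield a fixed point $z_N$ of $\phi(T,\cdot)$ with $\rot_\phi(z_N) = N$. Letting $N \to \infty$ produces infinitely many $T$-periodic solutions with mutually distinct rotation numbers, hence pairwise distinct.

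The hard part will be to legitimately invoke a Poincaré--Birkhoff-type statement despite possible non-continuability of orbits inside the annulus: the Poincaré map may fail to be defined on its entirety. The standard remedy in this context is to observe that an orbit escaping to infinity before time $T$ must first cross $\partial B_{R_N}$ outward, and by the superlinear rotation estimate it must already have accumulated rotation at least $N$ by that moment; combined either with a generalized twist theorem accommodating such partial definition, or with a cutoff of $\tilde f$ outside some much larger ball chosen so that the modified flow coincides with the original one on the relevant annulus up to time $T$, one recovers the fixed point. Making this argument clean, while preserving the area-preserving property and the twist condition, is the most delicate step of the proof.
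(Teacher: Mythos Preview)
Your approach is essentially the same as the paper's: invoke Corollary~\ref{cor:chall_star} to obtain a first $T$-periodic solution $\bar x$, translate so that the origin becomes a solution, and then use a Poincaré--Birkhoff argument to produce infinitely many solutions distinguished by rotation number. The only difference is cosmetic: the paper outsources the entire last step to \cite[Theorem~1.1]{Hart} (alternatively \cite[Theorem~1]{FonSfe}) as a black box, whereas you sketch the content of that reference, including the handling of non-continuability.
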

\begin{proof}
	By Corollary~\ref{cor:chall_star}, we deduce that \eqref{eq:Fode} admits a $T$-periodic solution $\bar{x}(t)$. We now consider the differential equation 
\begin{equation}\label{eq:modFode}
	\ddot{w}+f(t,w+\bar x(t))+\ddot{\bar x}(t)=0\,.
\end{equation}
We notice that $w\equiv 0$ is a solution of \eqref{eq:modFode}. Moreover, $\hat{w}$ is a $T$-periodic solution of \eqref{eq:modFode} if and only $\hat{w}+\bar x$ is a $T$-periodic solution of \eqref{eq:Fode}. The existence of infinitely many $T$-periodic solutions for \eqref{eq:modFode} is provided by \cite[Theorem~1.1]{Hart} (in alternative, see \cite[Theorem~1]{FonSfe}), thus concluding the proof.
\end{proof}

\begin{remark} \label{rem:a5calc} We presented \ref{hyp:Nexist} in a general abstract form; we notice however that it can be obtained by mild bounds on the radial component of \eqref{eq:Fcauchy}  on a bounded region. For instance, a sufficient condition for \ref{hyp:Nexist} is the following: 	\begin{quote}\textit{there exists $\alpha>0$ such that 
			$y(x-f(t,x,y)-p(t,x,y))<\alpha \abs{(x,y)}$ for every $(t,x,y)\in [0,T]\times B_{\alpha T}$}\,.
	\end{quote}
	Other sufficient conditions can be obtained analogously, for instance, replacing the uniform bound on the radial component with a suitable growth condition, or the parameter $\alpha$ with an integrable function $\alpha(t)$. We emphasise that \ref{hyp:Nexist} is much weaker than the existence on $[0,T]$ of all the solutions of \eqref{eq:Fcauchy} (which is equivalent global existence of solution), since it involves only a bounded region. In this sense, we observe that the counterexample to global existence in \cite{CofUll} trivially satisfies \ref{hyp:Nexist}, since the origin is a fixed point for the dynamics. A counterexample to \ref{hyp:Nexist_star}, if it exists, looks much harder to obtain.
\end{remark}

\begin{remark} \label{rem:a5idea}
	The nature of conditions \ref{hyp:Nexist} and \ref{hyp:Nexist_star} becomes clearer if we consider how rotational properties are normally used to recover periodic solutions.
	A key step in classical approaches is to consider rotation, defined as in \eqref{eq:rotdef},  for all the solutions starting from a closed curve surrounding the origin (i.e.~the point around which rotation is defined). This is well illustrated, for instance, by Theorem \ref{th:deg1}, where such a curve is provided by (a suitable part of) $\partial U$. The Poincaré--Birkhoff Theorem is another example, considering two of such curves. 
 This however implies that the solutions passing through the origin $0_{\R^2}$ during $[0,T]$ are \lq\lq trapped inside\rq\rq\ the orbit of such closed curve, therefore each of them exists on the whole interval, and thus \ref{hyp:Nexist} holds. On the other hand, if the origin is not \lq\lq trapped\rq\rq\ by $\partial U$, the topological structure changes drastically and, in general, the fixed-point may be lost. For instance, if in Theorem \ref{th:deg1} we consider instead the case $0_{\R^2}\notin \clos U$, we obtain $\deg(\phi(T,\cdot)-\I_{\R^2},U,0)=0$, which is not useful to our aims.
\end{remark}

\begin{remark}\label{rem:a5conj}
	Corollary \ref{cor:chall_star} leaves open the question whether \ref{hyp:Nexist_star} is always satisfied, or under which conditions on $f$ and $p$. We observe, however, that possible counterexamples would be pathological, with noncontinuability of solutions widespread everywhere on the phase plane.
	Several results in literature illustrate encouraging continuability properties for  \eqref{eq:introODE}, although none is sufficient for \ref{hyp:Nexist}.  In addition to the discussion at the end of the introduction on bounded (and therefore global) solutions, we mention for instance \cite{Elias}, where the existence of a global solution for \eqref{eq:introODE} is obtained under very general assumptions.
	
	Finally, we observe that the situation may be more favourable in the  Hamiltonian case $p\equiv 0$, due to the additionally available variational structure, which could possibly be helpful to verify or replace \ref{hyp:Nexist_star}. Indeed, in such a case the existence of multiple periodic solutions can be obtained also by variational methods, cf.~\cite{BaBe,Rab,Long} which studied the case of an autonomous $f=f(x)$. Noncontinuability still poses a severe issue also for variational methods, yet the structural role of \ref{hyp:Nexist_star} discussed in Remark~\ref{rem:a5idea} might be less pivotal following an alternative approach.
\end{remark}

\section{Proof of Theorem \ref{th:chall}}
We begin by recalling that \eqref{eq:Fode} is equivalent to a planar first order system: more precisely, we introduce the associated Cauchy problem
\begin{equation} \label{eq:Fcauchy}
	\begin{cases}
\dot x =y\\
\dot y = G(t,x,y):= -f(t,x)-p(t,x,y)\\
x(t_0)=x_0,\quad y(t_0)=y_0
	\end{cases}
\end{equation}
By \ref{hyp:reg} and \ref{hyp:uniqueness}  there is local existence and uniqueness of solution for the problem \eqref{eq:Fcauchy} for every initial value $(x_0,y_0)\in \R^2$. On the other hand, global existence of a solution, hence in particular its existence on $[0,T]$, may fail, see \cite{CofUll} for a counterexample.

We define the set $\Delta\subset[0,T]$ as
\begin{equation}
	\Delta:=\{(t,t_0)\subset[0,T]^2\quad\text{such that $t\geq t_0$}\}\,.
\end{equation}
We then introduce the evolution map  $\zeta\colon\Delta\times\R^2\to\R^2\cup\{\infty\}$ of \eqref{eq:Fcauchy}, defined as follows
\begin{equation}
	\zeta(t;t_0,x_0,y_0)=\begin{cases}(x(t),y(t)) &\text{if the solution $(x(s),y(s))$ of \eqref{eq:Fcauchy} exists on $[t_0,t]$; }\\
		\infty &\text{otherwise.}
		\end{cases}
\end{equation}
As a consequence of Theorem \ref{th:contdep}, the function $\zeta\colon \Delta\times\R^2\to\R^2\cup\{\infty\}$ is continuous, where, as usual, a base of neighbourhoods for $\infty$ is given by the sets $\R^2\cup\{\infty\}\setminus B_r$, for $r>0$.

The case $t_0=0$ is the most relevant on our analysis, so we write
\begin{equation}\label{eq:Fphidef}
	\phi(t,x_0,y_0):=\zeta(t;0,x_0,y_0)\colon [0,T]\times\R^2\to\R^2\cup\{\infty\}\,.
\end{equation}
Our plan is to apply  Theorem \ref{th:deg1}  to $\phi(T,\cdot)$, since fixed-points of $\phi(T,\cdot)$ correspond to $T$-periodic solutions of \eqref{eq:Fcauchy}.

Our approach is based on estimates of the rotation $\rho$ around the origin performed by a solution in given time interval. Such notion can be defined only if the solution does not cross the origin during such interval. To identify such situations, we define, for $t\in[0,T]$, the family of sets $\Null_t\subset\R^2$ as
\begin{equation}
	\Null_t=\{(x,y)\in\R^2\,\text{such that $\zeta(s;t,x,y)=0_{\R^2}$ for some $s\in[t,T]$}\}\,.
\end{equation}
Notice that $0_{\R^2}\in\Null_t$ and that $\Null_t$ is a compact set by \ref{hyp:Nexist} and Theorem~\ref{th:contdep}. We also write $\Null=\Null_0$, in accordance with \eqref{eq:defNull}.
Moreover, as a consequence of Theorem \ref{th:contdep}, using also \ref{hyp:Nexist} and the compactness of $\Null$, we observe that
\begin{enumerate}[label=\textup{(\roman*)}]
	\item there exists an open set $U_0$ such that $\Null\subset U_0$ and for every $(x_0,y_0)\in U_0$ the solution of \eqref{eq:Fcauchy} is defined on the whole interval $[0,T]$. \label{prop:Nneigh}
\end{enumerate} 
Setting
\begin{align*}
	\Omega=\{(s;t,\theta,r)\in\Delta\times \Hs\text{ such that $\zeta(s;t,\Psi(\theta,r))\neq\infty$ and $\Psi(\theta,r)\notin \Null_t$}\} ,
\end{align*}
by the homotopy lifting property and Theorem \ref{th:contdep} we deduce that there exists a unique continuous map $Z\colon\Omega\to\Hs$ such that
$Z(t;t,\theta,r)=(\theta,r)$ and
\begin{equation*}
	\Psi(Z(s;t,\theta,r))=\zeta(s;t,\Psi(\theta,r)) \qquad\text{for every $(s,t,\theta,r)\in\Omega$\,.}
\end{equation*}
Let us identify the two components of $Z$ as $Z=(Z^\theta,Z^r)$.
We are now ready to introduce a generalized notion of rotation for the solutions of \eqref{eq:Fcauchy}. Writing
\begin{equation*}
	\Gamma=\{(s,t,x,y)\quad\text{such that $(s,t)\in\Delta$ and $(x,y)\in(\R^2\setminus\Null_t)$}\}
\end{equation*}
 we define the function $\rho(s;t,x,y)\colon \Gamma\to (-\infty,+\infty]$ as 
\begin{equation}
\rho(s;t,\Psi(\theta,r))=\begin{cases}
	\displaystyle\frac{Z^\theta(s;t,\theta, r)-\theta}{2\pi}&\text{if $(s,t,\theta,r)\in\Omega$\,;}\\[1mm]
	+\infty &\text{otherwise.}
\end{cases}
\end{equation}
Notice that $\rho$ is well-defined, since, fixed any $(x,y)\in\R^2\setminus\Null_t$ with $\zeta(s;t,x,y)\neq\infty$, the value of  $Z^\theta(s;t,\theta,r)-\theta$ does not depend on the choice of $(\theta,r)\in\Psi^{-1}(x,y)$. We also remark that, with the notation of Section \ref{sec:prel}, if $\rho(T;0,x,y)<+\infty$ then $\rho(T;0,x,y)=\rot_\phi((x,y))$.

Our plan now is to prove that $\rho$ is continuous and that $\rho(T;0,x,y) \to+\infty$ as $\abs{(x,y)}\to +\infty$. We begin with some preliminary properties.

\begin{prop} The following statements are true:
\begin{enumerate}[label=\textup{(\roman*)},resume]
	\item if $\rho(t;t_0,x_0,y_0)=a$, then $\rho(s;t_0,x_0,y_0)\geq a-\frac{1}{2} $ for every $s>t$; \label{prop:monotrot}
	\item if $\rho(\bar t;t_0,x_0,y_0)=+\infty$, let us define \begin{equation*}
		t_\mathrm{max}:=\sup\{t\in[t_0,T]:\rho(t;t_0,x_0,y_0)<+\infty\}\,.
	\end{equation*} Then $\displaystyle
	\lim_{t\to t_\mathrm{max}} \rho(t;t_0,x_0,y_0) =+\infty$.
	\label{prop:rotinf}
\end{enumerate}
\end{prop}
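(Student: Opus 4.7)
\textbf{Plan for \ref{prop:monotrot}.}
I plan to exploit the structural constraint $\dot x = y$, which forces $x(\cdot)$ to be strictly monotone on every maximal open time subinterval during which $y$ has constant sign (a \emph{half-plane excursion}). Partitioning $[t,s]$ by the x-axis crossings $\tau_1 < \cdots < \tau_N$ of the trajectory yields intermediate excursions $[\tau_i,\tau_{i+1}]$ together with a first segment $[t,\tau_1]$ and a last segment $[\tau_N,s]$. On each intermediate excursion, the lift $Z^\theta$ stays in an open interval of length $\pi$ between two consecutive elements of $\pi\Z$, and both endpoints $Z^\theta(\tau_i), Z^\theta(\tau_{i+1})$ lie in $\pi\Z$. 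A case distinction on the sign of $x(\tau_i)$, combined with the monotonicity of $x$, rules out the transition with difference $-\pi$: for instance, an upper-half-plane excursion starting at the positive x-axis (where $x(\tau_i)>0$) cannot exit on the negative x-axis, since $x$ only increases in the upper half-plane. Hence each intermediate contribution $Z^\theta(\tau_{i+1})-Z^\theta(\tau_i)$ equals $0$ or $+\pi$, in particular nonnegative. For the first and last segments, a similar refined analysis---now based on in which of the four open quadrants the trajectory starts (resp.~ends)---shows that the corresponding $Z^\theta$-difference is strictly greater than $-\pi/2$. Summing the contributions gives $Z^\theta(s)-Z^\theta(t) > -\pi$, equivalently $\rho(s) > \rho(t) - \tfrac12$.

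\textbf{Plan for \ref{prop:rotinf}.}
By the definitions of $\rho$ and $\zeta$, the condition $\rho(\bar t;t_0,x_0,y_0) = +\infty$ is equivalent to non-continuability of the solution of \eqref{eq:Fcauchy} with data at time $t_0$ up to time $\bar t$; hence $t_{\max}$ coincides with the maximal forward existence time of the underlying trajectory. I plan to argue by contradiction: assume the stated limit fails. Then by the almost-monotonicity from \ref{prop:monotrot}, $\rho(t)$ is bounded above on $[t_0,t_{\max})$, so $Z^\theta(t)$ stays in a bounded interval. Since x-axis crossings correspond to $Z^\theta \in \pi\Z$, only finitely many distinct crossing levels are visited; together with the non-decrease of $Z^\theta$ at consecutive crossings (a fact established within the plan above), either there are finitely many crossings in $[t_0,t_{\max})$, or, past some time, all crossings take place at a single level $K\pi$.

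In both scenarios the contradiction with blowup is to be obtained via an energy-type identity: on any half-plane excursion $[a,b]$, the relation $y\,dy = -(f+p)\,dx$ (consequence of $\dot x = y$) integrates to
\begin{equation*}
  y(t)^2 = y(a)^2 - 2\int_{x(a)}^{x(t)} \bigl[ f(\sigma(u),u) + p(\sigma(u),u,y(\sigma(u))) \bigr]\,du,
\end{equation*}
where $\sigma$ inverts the monotone map $s\mapsto x(s)$. Combining the superlinear lower bound $xf(t,x)\geq Mx^2$ valid for $|x|\geq X_0$ (from \ref{hyp:growth}) with the at-most-linear bound $|xp|\leq|x|\gamma_p(t)+C_p x^2$ (from \ref{hyp:plim}), and choosing $M>C_p$, the identity becomes self-bounding for $y^2$ in terms of its own maximum, yielding a uniform constant bound whenever $y(a)=0$; via $\dot x = y$ the coordinate $x$ is then confined on the finite subinterval between two crossings. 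The most delicate step I anticipate is the scenario of infinitely many crossings concentrated at a single level $K\pi$: there one must additionally use the sign of $f+p$ at the crossing to classify the half-plane of the ensuing excursion, and invoke the energy identity over a full oscillation to rule out unbounded growth of the sequence $\{x(\tau_n)\}$.
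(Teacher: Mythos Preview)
Your proposal is built around the wrong axis. The structural identity $\dot x=y$ singles out the $y$-axis $\{x=0\}$, not the $x$-axis, as the natural barrier: away from the origin, every crossing of $\{x=0\}$ is transversal (since $\dot x=y\neq 0$ there) and occurs only in the clockwise direction. The paper's proof of \ref{prop:monotrot} is essentially this one observation: to have $\rho(s)<a-\tfrac12$ one would need $Z^\theta$ to traverse an interval of length $>\pi$ backwards, hence to cross the $y$-axis counterclockwise, which is impossible. Your $x$-axis decomposition with quadrant casework reaches the same conclusion, but is longer and has to cope with the fact that zeros of $y$ need not be isolated (your finite list $\tau_1<\cdots<\tau_N$ is not guaranteed by the equations); this is fixable by working with connected components of $\{y\neq 0\}$, but it is extra work the paper avoids entirely.

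For \ref{prop:rotinf} the difference is more consequential. Once $Z^\theta$ is bounded, the paper notes that the $y$-axis is crossed only finitely many times (transversality plus clockwise direction), so eventually $x$ has a constant sign. From there a short chain of elementary integral bounds---use \ref{hyp:growth}+\ref{hyp:plim} to bound $\dot y$ above when $x\geq 0$, integrate to bound $y$, then $x$, then $\dot y$ below---traps the trajectory in a compact set and contradicts blow-up. No energy identity is needed. Your route through $x$-axis crossings runs into the ``delicate step'' you flag, which is not adequately resolved in your outline: bounding $y$ by the self-referential energy estimate works only on half-plane excursions where $|x|$ is increasing, while on the decreasing ones the superlinear term has the wrong sign and no uniform bound follows; controlling the sequence $\{x(\tau_n)\}$ by the sign of $f+p$ at a single time is fragile in the Carath\'eodory setting. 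In fact, your ``single level $K\pi$'' scenario, combined with the $y$-axis observation, already forces $x$ to keep a constant sign between consecutive $\tau_n$ (a counterclockwise $y$-axis crossing would otherwise occur), so you land back at the paper's reduction anyway. Switching to the $y$-axis from the start collapses both parts to a few lines.
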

Notice that $t_\mathrm{max}>0$ in \ref{prop:rotinf} is well defined, since $\rho(t_0;t_0,x_0,y_0)=0$. Also, let us  denote for simplicity the orbit of the solution associated to the initial values $(t_0,x_0,y_0)$ as $\zeta(\cdot;t_0,x_0,y_0)=:\zeta^0(\cdot)=(\zeta^0_x,\zeta^0_y)(\cdot)$, where the last decomposition applies only when $\zeta^0$ has finite value.
\begin{proof}

We prove first \ref{prop:monotrot}.  In order to have $\rho(s;t_0,x_0,y_0)< a-\frac{1}{2}$ for some $s>t$, then the orbit $\zeta^0(\cdot)$ would have to cross the $y$-axis counter-clockwise This is not possible, since from the first equation in \eqref{eq:Fcauchy} it is clear that the $y$-axis can be crossed only clockwise. Observe that since $(x_0,y_0)\notin \Null_{t_0}$  the orbit $\zeta^0(\cdot)$ cannot cross the origin $0_{\R^2}$. Moreover, by construction, if $\rho(s;t_0,x_0,y_0)=+\infty$ then $\rho(\tilde s;t_0,x_0,y_0)=+\infty$ for every $\tilde s\in[s,T]$. Hence, the statement \ref{prop:monotrot} is true.

Suppose now by contradiction that \ref{prop:rotinf} is false. Then, since the $y$-axis can be crossed only clockwise outside the origin, it has to be crossed only a finite number of times by the orbit $\zeta^0(\cdot)$. Let us therefore take $\tilde t\in[t_0,t_\mathrm{max})$ such that $\zeta^0_x(t)\neq 0$ for every $t\geq \tilde t$. We discuss only the case $\zeta^0_x(\tilde t)>0$, since the case $\zeta^0_x(\tilde t)<0$ is symmetric.

Let us notice that by \ref{hyp:reg},\ref{hyp:growth} and \ref{hyp:plim} there exists a positive  function $\gamma_1\in L^1([0,T],\R)$ such that
\begin{equation*}
	G(t,x,y)<\gamma_1(t) \quad\text{for every $(t,x,y)\in[0,T]\times[0,+\infty)\times\R$\,.}
\end{equation*} 
Precisely, we use \ref{hyp:growth} to obtain the existence of $x^*>0$ such that $-f(t,x)+C^px<0$ for every $x>x^*$ and $t\in[0,T]$. We conclude setting $\gamma_1=\gamma_p+m_D$, where $m_D$ is given by the third condition of the Carathéodory property for $f$ with respect to the compact set $D=[0,T]\times[0,x^*]$.
It follows that \begin{equation*}
	\zeta^0_y(t)<\zeta^0_y(\tilde t)+\int^{t_\mathrm{max}}_{\tilde t}\gamma_1(\tau)\dd \tau:=y_\mathrm{max} \qquad \text{for every $t\in[\tilde t,t_\mathrm{max})$\,.}
\end{equation*} 
This, consequently, implies that $\zeta_x^0(t)<\zeta^0_x(\tilde t)+y_\mathrm{max}(t_\mathrm{max}-\tilde t)=:x_\mathrm{max}$  for every $t\in[\tilde t,t_\mathrm{max})$. 
Reasoning as above, by \ref{hyp:reg},\ref{hyp:growth} and \ref{hyp:plim} there exists a positive function $\gamma_2\in L^1([0,T],\R)$ such that
\begin{equation*}
	G(t,x,y)>-\gamma_2(t) \quad\text{for every $(t,x,y)\in[0,T]\times[0,x_\mathrm{max}]\times\R$\,.} 
\end{equation*} 
Hence, $\zeta^0_y(t)>\zeta^0_y(\tilde t)-\int^{t_\mathrm{max}}_{\tilde t}\gamma_2(\tau)\dd \tau=:y_\mathrm{min}$  for every $t\in[\tilde t,t_\mathrm{max})$. 

Since we have shown that $\zeta^0(t)$ remains within the compact set $[0,x_\mathrm{max}]\times[y_\mathrm{min},y_\mathrm{max}]$ for every $t\in[\tilde t,t_\mathrm{max})$, we deduce that the solution of \eqref{eq:Fcauchy} has a continuation to an interval $[0,t_\mathrm{max}+\epsilon)$ for some $\epsilon>0$ (cf.~for instance \cite[Ch.~I, Th.~5.2]{Hale}), reaching the desired contradiction.
\end{proof}

\begin{lemma} \label{lemma:myrotcont}
	The function $\rho$ is continuous.
\end{lemma}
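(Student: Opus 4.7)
The plan is to split according to the value of $\rho$ at the target point: where $\rho$ is finite, continuity is inherited from the continuity of the lift $Z$ on $\Omega$ already established via the homotopy lifting property; where $\rho=+\infty$, I reduce to the finite case at a slightly earlier time, using the blow-up control provided by properties \ref{prop:monotrot} and \ref{prop:rotinf}.

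At a target $(s_0,t_0,x_0,y_0)\in\Gamma$ with $\rho(s_0;t_0,x_0,y_0)=a\in\R$, I would fix a lift $(\theta_0,r_0)\in\Psi^{-1}(x_0,y_0)$ (available since $0_{\R^2}\in\Null_{t_0}$ forces $(x_0,y_0)\neq 0_{\R^2}$); for any sequence $(s_n,t_n,x_n,y_n)\to(s_0,t_0,x_0,y_0)$ in $\Gamma$ the local inverse of $\Psi$ away from the origin produces lifts $(\theta_n,r_n)\to(\theta_0,r_0)$. The inclusion $(s_n,t_n,\theta_n,r_n)\in\Omega$ holds for $n$ large: the condition $\Psi(\theta_n,r_n)\notin\Null_{t_n}$ is built into $\Gamma$, while $\zeta(s_n;t_n,\Psi(\theta_n,r_n))\neq\infty$ is forced by the continuity of $\zeta$ on $\Delta\times\R^2$ (with $\R^2$ open in $\R^2\cup\{\infty\}$) together with the finiteness of $\zeta(s_0;t_0,x_0,y_0)$. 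The continuity of $Z^\theta$ on $\Omega$ then delivers $\rho(s_n;t_n,x_n,y_n)\to a$.

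At a target with $\rho(s_0;t_0,x_0,y_0)=+\infty$, since $(x_0,y_0)\notin\Null_{t_0}$ the divergence can only arise from blow-up of $\zeta^0$ at some $t_\mathrm{max}\in(t_0,s_0]$. Given $M>0$, property \ref{prop:rotinf} furnishes $t^*\in(t_0,t_\mathrm{max})$ with $\rho(t^*;t_0,x_0,y_0)>M+1$ (finite, as $t^*<t_\mathrm{max}$). Applying the first case to $(t^*,t_0,x_0,y_0)$ and the sequence $(t^*,t_n,x_n,y_n)$, which lies in $\Gamma$ and satisfies $t_n<t^*$ for $n$ large, I obtain $\rho(t^*;t_n,x_n,y_n)>M+\tfrac{1}{2}$ eventually; since $s_n>t^*$ eventually (because $s_0\geq t_\mathrm{max}>t^*$), property \ref{prop:monotrot} yields $\rho(s_n;t_n,x_n,y_n)\geq\rho(t^*;t_n,x_n,y_n)-\tfrac{1}{2}>M$, and arbitrariness of $M$ gives $\rho(s_n;t_n,x_n,y_n)\to+\infty$.

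The main obstacle is the topological consistency of the first case, namely ensuring that the approximating data actually land in $\Omega$: nearby orbits must neither escape to infinity before time $s_n$ nor pass through $0_{\R^2}$. Both issues are controlled by the already-established continuity of $\zeta$ on $\Delta\times\R^2$ and by the condition $(x_n,y_n)\notin\Null_{t_n}$ encoded in $\Gamma$; once $\Omega$-membership of the approximating quadruples is secured, the statement reduces to a direct invocation of the continuous lift built via the homotopy lifting property, and the infinite case is handled by a short reduction argument as above.
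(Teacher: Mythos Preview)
Your proposal is correct and follows essentially the same route as the paper: both split into the finite-value and infinite-value cases, invoke continuity of the lift $Z$ (via continuous dependence and the local homeomorphism property of $\Psi$) for the first, and for the second use \ref{prop:rotinf} to back off to an earlier time $t^*$ with large finite rotation, transfer this to nearby data via the finite case, and then push forward with \ref{prop:monotrot}. The only cosmetic difference is that the paper works with neighbourhoods while you work with sequences.
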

\begin{proof}
First of all, let us notice that $\rho$ is continuous at each point $(s,t,x,y)$ such that $\zeta(s;t,x,y)\neq\infty$. This follows from Theorem \ref{th:contdep} and the fact that the restriction of $\Psi$ to $\Hs$ is a local homeomorphism.

It remains to show that $\rho$ is continuous where its value is $+\infty$. Let us consider $(t,t_0,x_0,y_0)\in\Gamma$ such that $\rho(t;t_0,x_0,y_0)=+\infty$. We now show that for every $j\in\N$ there exists a neighbourhood $[t_j,T]\times V_0\subset \Gamma$, with $t_j<t\leq T$,  such that $\rho(s;t,x,y)>j$ for every $(s,t,x,y)\in [t_j,T]\times V_0$.

By \ref{prop:rotinf}, there exist $t_{j}<t$ such that $\rho(t_j;t_0,x_0,y_0)=a>j+1$ with $a\in\R$. Then, by the continuity of $\rho$ in $(t_j,t_0,x_0,y_0)$, which we have by the first part of the proof, we know that there exists a neighbourhood $V_0$ of $(t_0,x_0,y_0)$ such that $\rho(t_j,t,x,y)>j+1$ for every $(t,x,y)\in V_0$. By \ref{prop:monotrot}, we deduce that $\rho(s,t,x,y)>j+\frac{1}{2}$ for every $(s,t,x,y)\in [t_j,T]\times V_0$, completing the proof.
\end{proof}

\begin{lemma} \label{lemma:stimarot}
	For every $n\in\N$ there exists a radius $\widetilde{R}_n>0$ such that, if $(x,y)\in \R^2\setminus \Null$ and $\widetilde{R}_n<\abs{\phi(t,x,y)}\neq\infty$ for every $t\in[0,T]$, then  $\rho(T;0,x,y)>n$.
\end{lemma}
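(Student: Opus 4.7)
The plan is to work in the clockwise polar coordinates from Section~\ref{sec:prel} and use the superlinear growth of $f$ to show that, when $r(t)=|\phi(t,x,y)|$ is kept above a sufficiently large threshold, each full rotation consumes an arbitrarily small amount of time.

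A direct computation along any solution of \eqref{eq:Fcauchy} gives
\begin{equation*}
\dot\theta \;=\; \frac{y^2 + x f(t,x) + x p(t,x,y)}{x^2+y^2}\,.
\end{equation*}
Given $n\in\N$, I first fix $K$ large (to be chosen). By \ref{hyp:growth} there exists $M_K>0$ such that $f(t,x)/x\geq K$ for every $|x|\geq M_K$ and every $t\in[0,T]$; combining with \ref{hyp:plim}, as long as $r\geq \widetilde R_n$ with $\widetilde R_n$ a large multiple of $M_K$, on $\{|x|\geq M_K\}$ we obtain
\begin{equation*}
\dot\theta \;\geq\; \sin^2\theta + (K-C_p)\cos^2\theta - \frac{\gamma_p(t)}{r}\,,
\end{equation*}
while on the complementary strip $\{|x|< M_K\}$ the bound $y^2\geq r^2-M_K^2$ together with the Carathéodory bounds for $f$ and $p$ on the compact set $[0,T]\times[-M_K,M_K]$ yields $\dot\theta\geq 1/2$ once $\widetilde R_n$ is large enough.

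To estimate the time $\tau$ required for one full rotation, I parametrize by $\theta$ (legitimate while $\dot\theta>0$): the angular strip $\{|\cos\theta|<M_K/\widetilde R_n\}$ has measure $O(M_K/\widetilde R_n)$ and contributes at most $O(M_K/\widetilde R_n)$ via $\dot\theta\geq 1/2$; on the complementary region the sharp bound, combined with the elementary identity $\int_0^{2\pi}d\theta/(\sin^2\theta+\mu\cos^2\theta)=2\pi/\sqrt\mu$ (via $u=\tan\theta$), gives $2\pi/\sqrt{K-C_p}$ plus a correction of order $\|\gamma_p\|_{L^1}/\widetilde R_n$ coming from the $\gamma_p(t)/r$ term. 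Altogether
\begin{equation*}
\tau \;\leq\; \frac{2\pi}{\sqrt{K-C_p}} + O\!\left(\frac{M_K+\|\gamma_p\|_{L^1}}{\widetilde R_n}\right).
\end{equation*}
Choosing first $K$ so that $2\pi/\sqrt{K-C_p}<T/(2n)$ and then $\widetilde R_n$ so that the remaining terms are also below $T/(2n)$, each rotation takes less than $T/n$; hence in $[0,T]$ the orbit rotates more than $n$ times, yielding $\rho(T;0,x,y)>n$.

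The main delicacy is that $\gamma_p$ is only $L^1$, so the pointwise error $\gamma_p(t)/r$ need not be uniformly small and $\dot\theta$ could even become negative on an exceptional set of times. This is handled by isolating $B_R:=\{t\in[0,T]:\gamma_p(t)>R/2\}$, which has measure at most $2\|\gamma_p\|_{L^1}/R\to 0$: on its complement the sharp estimates apply and are responsible for the bulk of the rotations, while on $B_R$ the monotonicity property~\ref{prop:monotrot}, forbidding the rotation to decrease by more than $1/2$, guarantees that at most a bounded amount of rotation is lost, negligible against the many fast rotations accumulated in the bulk.
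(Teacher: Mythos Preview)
Your argument shares the paper's core idea---compute the angular velocity, use superlinearity to get $\dot\theta$ bounded below by $\alpha\cos^2\theta+\sin^2\theta$ minus an $L^1$ error, and invoke $\int_0^{2\pi}d\theta/(\alpha\cos^2\theta+\sin^2\theta)=2\pi/\sqrt\alpha$---but the paper's execution is tighter in two respects that let it avoid the complications you introduce. First, rather than splitting into $\{|x|\geq M_K\}$ and the strip $\{|x|<M_K\}$, the paper absorbs both regions into a single inequality $(f+p)x>\alpha x^2-\gamma_\alpha(t)|x|$ with one $L^1$ function $\gamma_\alpha$ (built from $\gamma_p$ and the Carath\'eodory bound $m_D$ on $[0,T]\times[0,x^*]$), so no separate strip analysis is needed. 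Second, instead of parametrizing by $\theta$---which forces you to secure $\dot\theta>0$ and then repair the bad set via \ref{prop:monotrot}---the paper integrates $\dot{\tilde\theta}/(\alpha\cos^2\tilde\theta+\sin^2\tilde\theta)$ in $t$ over $[0,T]$. Since the denominator is $\geq 1$, the error term contributes at most $\|\gamma_\alpha\|_{L^1}/\widetilde R_n$ regardless of the sign of $\dot{\tilde\theta}$, and the change of variables to $\theta$ is a plain substitution that works even when $\tilde\theta$ is not monotone. One line then gives $(k+1)\cdot 2\pi/\sqrt\alpha\geq T(1-b_\alpha/\widetilde R_n)$, and the bad-set argument is unnecessary.

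There is also a small gap in your strip estimate: the Carath\'eodory bound for $f$ on $[0,T]\times[-M_K,M_K]$ is an $L^1$ function $m_D(t)$, not a constant, so $|x f|/r^2\leq M_K m_D(t)/\widetilde R_n^2$ is not pointwise small and $\dot\theta\geq 1/2$ does not hold for a.e.\ $t$. This is repairable---enlarge $B_R$ to include $\{m_D(t)>R/2\}$ as well---but it illustrates why the paper's device of packaging all error terms into a single $\gamma_\alpha\in L^1$ and then integrating against a bounded weight is cleaner.
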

\begin{proof}
	Let us take $(x,y)\in \R^2\setminus \Null$ such that $\widetilde{R}_n<\phi(t,x,y)\neq\infty$ for every $t\in[0,T]$ and a value $\widetilde{R}_n$ yet to be assigned.
	As a consequence of \ref{hyp:reg},\ref{hyp:growth} and \ref{hyp:plim}, for every $\alpha>1$ there exist a positive function $\gamma_\alpha\in L^1([0,T],\R)$ such that 
	\begin{equation}\label{eq:super_est}
		(f(t,x)+p(t,x,y))x>\alpha x^2-\gamma_\alpha(t)\abs{x} \quad\text{for every $(t,x,y)\in[0,T]\times \R^2$}
	\end{equation}
We set $b_\alpha=\frac{1}{T}\int_0^T\gamma_\alpha(\tau)\dd \tau\geq0$.

Let us fix any $(\theta, r)\in\Psi^{-1}(x,y)$. For the computation in this proof, we write $\tilde \theta (t):=Z^\theta(t;0,\theta,r)$ and $(\tilde x(t),\tilde y(t)):=\zeta(t;0,x,y)$. Since we are using the polar coordinates \eqref{eq:Fpolar}, which are a local diffeomorphism outside the origin, \eqref{eq:super_est} yields
\begin{align*}
	\dot{\tilde \theta}(t)&=\frac{\bigl[f(t,\tilde x(t))+p(t,\tilde x(t),\tilde y(t))\bigr]\tilde x(t)+\tilde y^2(t)}{\tilde x^2(t)+\tilde y^2(t)}\\
	&>
	\frac{\alpha \tilde x^2(t)+\tilde y^2(t)}{\tilde x^2(t)+\tilde y^2(t)}-\frac{\gamma_\alpha(t)\abs{\tilde x(t)}}{\tilde x^2(t)+\tilde y^2(t)}\geq \alpha\cos^2\tilde\theta(t)+\sin^2\tilde\theta(t)-\frac{\gamma_\alpha(t)}{\widetilde{R}_n}\,.
\end{align*}

Let us take $k\in\Z$ such that $2\pi(k+1)\geq \tilde\theta(T)-\tilde\theta(0)> 2\pi k$; by definition we have 
\begin{equation}
	\rho(T;0,x,y)=\frac{\tilde\theta(T)-\tilde\theta(0)}{2\pi}> k \,.
\end{equation}
Writing for $\alpha>1$
\begin{equation}
	L_\alpha:=\int_0^{2\pi}\frac{\dd\beta}{\alpha \cos^2 \beta+\sin^2 \beta}=\frac{2\pi}{\sqrt{\alpha}}\,,
\end{equation}
we have
\begin{equation*}
2\pi (k+1)L_\alpha\geq\int_{\tilde\theta(0)}^{\tilde\theta(T)}\frac{\dd\beta}{\alpha \cos^2 \beta+\sin^2 \beta}=\int_{0}^{T}\frac{\dot{\tilde \theta}(t)\dd t}{\alpha \cos^2 \tilde\theta(t)+\sin^2 \tilde\theta(t)}>T\left(1-\frac{b_\alpha}{\widetilde{R}_n}\right)
\end{equation*}
where in the estimate of the last term we have used that $\alpha \cos^2 \beta+\sin^2 \beta\geq 1$ for every $\beta\in\R$, since $\alpha>1$. Thus
\begin{equation}
k>\frac{T}{2\pi L_\alpha}\left(1-\frac{b_\alpha}{\widetilde{R}_n}\right)-1=\frac{T\sqrt{\alpha}}{4\pi^2 }\left(1-\frac{b_\alpha}{\widetilde{R}_n}\right)-1\,.
\end{equation}
Fixing now $\alpha>1$ and $\widetilde{R}_n>0$ such that
\begin{align}\label{eq:rot_est_req}
	\sqrt{\alpha}>\frac{8\pi^2n}{T} && \widetilde{R}_n>2b_\alpha
\end{align}
we obtain that $k>n-1$; since $k$ and $n$ are both integers, it follows that $k\geq n$ and therefore $\rho(T;0,x,y)>n$. Notice also that the requirements \eqref{eq:rot_est_req} depend on $n,g,p$, but not on the specific choice of $(x,y)$.
\end{proof}

\begin{lemma}\label{lemma:rotelast}
For every $n\in\N$ there exists a radius $R_n>0$ such that, for every $(x,y)\in \R^2$ with $\abs{(x,y)}>R_n$, we have $\rho(T;0,x,y)>n$.
\end{lemma}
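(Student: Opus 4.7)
The plan is to argue by contradiction, combining the compactness of $[0,T]\times\clos{B_{\widetilde{R}_n}}$, continuous dependence (Theorem~\ref{th:contdep}), and the infinite-rotation property \ref{prop:rotinf}. Assume the lemma fails: there is $n\in\N$ and a sequence $(x_k,y_k)$ with $\abs{(x_k,y_k)}\to+\infty$ and $\rho_k:=\rho(T;0,x_k,y_k)\leq n$. In particular $\rho_k<+\infty$, so each orbit $z_k(\cdot):=\phi(\cdot,x_k,y_k)$ is defined on $[0,T]$, and Lemma~\ref{lemma:stimarot} forces $z_k$ to enter $\clos{B_{\widetilde{R}_n}}$ at some $t_k\in[0,T]$. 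Passing to a subsequence, I may assume $t_k\to t^*\in[0,T]$ and $z_k(t_k)\to z^*$ with $\abs{z^*}\leq\widetilde{R}_n$.

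Next I would analyze the backward evolution of the limit point $(t^*,z^*)$. If $\zeta(0;t^*,z^*)$ were finite, Theorem~\ref{th:contdep} would give $z_k(0)=\zeta(0;t_k,z_k(t_k))\to\zeta(0;t^*,z^*)$, contradicting $\abs{z_k(0)}\to+\infty$. Hence the backward orbit from $(t^*,z^*)$ must blow up at some $t_{\min}\in(0,t^*]$, with $\abs{\zeta(t;t^*,z^*)}\to+\infty$ as $t\to t_{\min}^+$. Note that the limit orbit avoids the origin on $(t_{\min},t^*]$: if it passed through $0_{\R^2}$, then \ref{hyp:Nexist} together with \ref{hyp:uniqueness} would force it to exist on $[0,T]$, contradicting the backward blow-up.

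The crux of the argument is to show that this backward blow-up forces the rotation of $\zeta(\cdot;t^*,z^*)$ on $[s,t^*]$ to diverge as $s\to t_{\min}^+$. I would obtain this by time-reversing the equation via $\hat z(\tau):=\zeta(t^*-\tau;t^*,z^*)$: the resulting ODE satisfies the same structural conditions \ref{hyp:reg}, \ref{hyp:growth}, \ref{hyp:plim} and preserves the first-order structure $\dot{\hat x}=\hat y$ underlying \ref{prop:monotrot}, so \ref{prop:rotinf}, whose proof uses only these ingredients, applies to the forward blow-up of $\hat z$ at $\tau_{\max}=t^*-t_{\min}$. A direct computation based on \eqref{eq:Fpolar} shows that the rotation of $\hat z$ over $[0,\tau]$ equals the forward rotation of $\zeta(\cdot;t^*,z^*)$ over $[t^*-\tau,t^*]$, which therefore tends to $+\infty$ as $\tau\to\tau_{\max}^-$.

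To conclude, fix $s\in(t_{\min},t^*)$ such that the rotation of $\zeta(\cdot;t^*,z^*)$ on $[s,t^*]$ exceeds $n+2$. By Theorem~\ref{th:contdep}, $z_k\to\zeta(\cdot;t^*,z^*)$ uniformly on $[s,t^*]$; since the orbits avoid the origin on this compact interval, the associated lifts converge, so the rotation of $z_k$ on $[s,t_k]$ eventually exceeds $n+1$. But \ref{prop:monotrot} combined with $\rho_k\leq n$ gives $\rho(t;0,z_k)\in[-\tfrac{1}{2},n+\tfrac{1}{2}]$ for every $t\in[0,T]$, so the rotation of $z_k$ over any subinterval of $[0,T]$ is bounded by $n+1$ -- a contradiction. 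The main obstacle is the divergence step, as it requires carefully matching the clockwise polar convention of $\Psi$ with the orientation change induced by $t\mapsto t^*-t$ in order to invoke \ref{prop:rotinf} in its original form.
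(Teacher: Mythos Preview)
Your contradiction-plus-sequential-compactness strategy is sound and rests on the same two pillars as the paper's proof: Lemma~\ref{lemma:stimarot} for orbits that stay far out, and a time-reversal of \eqref{eq:Fcauchy} to control the rotation accumulated while the orbit travels from far out into $\clos{B_{\widetilde R_n}}$. One detail needs repair, however. The na\"ive reversal $\hat z(\tau)=\zeta(t^*-\tau;t^*,z^*)$ yields $\dot{\hat x}=-\hat y$, \emph{not} $\dot{\hat x}=\hat y$, so \ref{prop:monotrot} and \ref{prop:rotinf} do not apply ``in their original form''; relatedly, the rotation of $\hat z$ over $[0,\tau]$ equals \emph{minus} the rotation of $\zeta(\cdot;t^*,z^*)$ over $[t^*-\tau,t^*]$, not the rotation itself. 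The paper handles this by composing the time reversal with the reflection $\hat x=-x$, $\hat y=y$ (see \eqref{eq:FCau_elastic}), which restores $\dot{\hat x}=\hat y$ and makes the two rotations genuinely coincide, cf.~\eqref{eq:rot_equiv}. With that correction your argument goes through; you already flagged this orientation issue as the ``main obstacle'', so you are clearly aware something must be checked there. (A smaller slip: the backward blow-up time satisfies $t_{\min}\in[0,t^*)$, not $(0,t^*]$.)

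Where you and the paper part ways is in how compactness on $[0,T]\times\clos{B_{\widetilde R_n}}$ is exploited. The paper works constructively: it covers this set by finitely many neighbourhoods on which either $\widehat\rho(T;\cdot)>n+1$ or $\abs{\widehat\zeta(T;\cdot)}$ is bounded by some $C_k$, and sets $R_n=\max_k C_k$; the second alternative is then ruled out for any entry point $(\bar t,\bar x,\bar y)$ coming from $\abs{(x,y)}>R_n$. You instead extract a convergent subsequence of entry points and show the limit backward orbit must blow up, then pull back the divergent rotation via \ref{prop:rotinf} and continuous dependence. Your route is arguably more direct and avoids the case distinction in the covering argument, at the cost of not producing an explicit $R_n$; the paper's route packages the blow-up information into the continuity of $\widehat\rho$ at $+\infty$ (Lemma~\ref{lemma:myrotcont}) rather than invoking \ref{prop:rotinf} again.
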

\begin{proof}
	Let us set 
\begin{equation*}
R_\Null=1+\max\left\{\abs{\phi(t,x,y)}\;\text{such that $(t,x,y)\in[0,T]\times\Null$}\right\}<+\infty\,.
\end{equation*}
Notice that $R_\Null$ is well defined thanks to \ref{hyp:Nexist}. We take $\widetilde{R}_n$ from Lemma \ref{lemma:stimarot} and, without loss of generality, assume $\widetilde{R}_n>R_\Null$.
We make the following claim: \medskip

 \emph{ There exists $R_n>\widetilde R_n$	such that each point $(x,y)\in \R^2$ with $\abs{(x,y)}>R_n$  satisfies one of the two following alternative properties:
\begin{itemize}
	\item $\abs{\phi(t,x,y)}>\widetilde{R}_n$ for every $t\in[0,T]$ (including the case $\phi(t,x,y)=\infty$)
	\item there exists $(\bar t,\bar x, \bar y)\in[0,T]\times\R^2$ such that $\phi(\bar t,x,y)=(\bar x, \bar y)$, $\abs{(\bar x,\bar y)}<\widetilde{R}_n$ and
	\begin{equation} \label{eq:rot_interelastic}
		\rho(\bar t;0,x,y)>n+1\,.
	\end{equation}
\end{itemize}}\noindent
Please notice that $\rho(\cdot;\cdot,x,y)$ is well defined since $(x,y)\notin\Null$ due to $R_n>\widetilde R_n>R_\Null$.

If the claim is true, then Lemma \ref{lemma:rotelast} follows straightforwardly. Indeed, in the first case either $\phi(T,x,y)=\infty$, hence $\rho(T;0,x,y)=+\infty$, or $\phi(T,x,y)\neq \infty$, thus $\rho(T;0,x,y)>n$ follows from Lemma \ref{lemma:stimarot}. If instead the second option holds, then $\rho(T;0,x,y)>n$ follows from \eqref{eq:rot_interelastic} and \ref{prop:monotrot}.

Thus, it remains to show that the claim is true. Let us take any $(x,y)$ with $\abs{(x,y)}>R_n$. We observe that either the first option of the claim holds, or there exists $(\bar t,\bar x, \bar y)\in[0,T]\times\R^2$ such that $\phi(\bar t,x,y)=(\bar x, \bar y)$ and $\abs{(\bar x,\bar y)}<\widetilde{R}_n$. Hence, we just have to prove that, in the latter case, for a suitable choice of $R_n$ the estimate \eqref{eq:rot_interelastic} holds.
	
Let us consider the Cauchy problem
\begin{equation}\label{eq:FCau_elastic}
	\begin{cases}
		\displaystyle\frac{\dd \hat x}{\dd \hat t}  =\hat y\\[4mm]
		\displaystyle\frac{\dd \hat y}{\dd \hat t} = -G(T-\hat t,-\hat x,\hat y)=:\widehat G(\hat t,\hat x,\hat y)\\[4mm]
		\hat x(\hat t_0)=\hat x_0,\quad \hat y(\hat t_0)=\hat y_0
	\end{cases}
\end{equation}
Firstly, we notice that the solution of \eqref{eq:FCau_elastic} for any choice of the Cauchy condition corresponds to the solution of \eqref{eq:Fcauchy} through the change of variables
\begin{align*}
	\hat x&=-x &\hat y&=y & \hat t&= T-t\\
	\hat x_0&=-x_0 &\hat y_0&=y_0 & \hat t_0&= T-t_0
\end{align*}
Secondly, we observe that, writing accordingly $\hat f(\hat t,\hat x):=-f(T-\hat t,-\hat x)$ and $\hat p(\hat t,\hat x,\hat y):=-p(T-\hat t,-\hat x,\hat y)$, the assumptions \ref{hyp:reg},\ref{hyp:uniqueness},\ref{hyp:growth},\ref{hyp:plim} and \ref{hyp:Nexist} are satisfied also by the new dynamics \eqref{eq:FCau_elastic}. Hence, we can define analogously $\widehat\zeta$, $\widehat{\Null_{\hat t}}$ and $\widehat\rho$ for system \eqref{eq:FCau_elastic} and apply the related results proved above.

In particular, we notice that 
\begin{equation}\label{eq:rot_equiv}
\rho(t;0,x,y)=\widehat\rho(T;T-t,-x^*,y^*) \qquad\text{where $(x^*,y^*)=\phi(t,x,y)$\,.}
\end{equation}
As a consequence of Theorem \ref{th:contdep}, \ref{hyp:Nexist} and Lemma \ref{lemma:myrotcont} applied to the dynamics \eqref{eq:FCau_elastic}, for every $(\hat t,\hat x,\hat y)\in[0,T]\times \clos{B_{\widetilde{R}_n}}$ we are in one of the following cases
	\begin{enumerate}[label=\textup{\alph*)}]
 		\item $(\hat x,\hat y)\in\widehat{\Null_{\hat t}}$ or  $\widehat{\rho}(T;\hat t,\hat x,\hat y)\leq n+1$. In both situations there exists a neighbourhood $V_{\hat t,\hat x,\hat y}$ of $(\hat t,\hat x,\hat y)$ and a constant $C_{\hat t,\hat x,\hat y}$ such that $\abs{\widehat\zeta(T;s,u,w)}<C_{\hat t,\hat x,\hat y}$ for every $(s,u,w)\in V_{\hat t,\hat x,\hat y}$\,;
		\item $\widehat{\rho}(T;\hat t,\hat x,\hat y)> n+1$. Hence, there exists a neighbourhood $V_{\hat t,\hat x,\hat y}$ of $(\hat t,\hat x,\hat y)$ such that $\widehat{\rho}(T;s,u,w)> n+1$ for every $(s,u,w)\in V_{\hat t,\hat x,\hat y}$. In this case we set the constant $C_{\hat t,\hat x,\hat y}=1$\,. \label{prop:rot_elast_second}
	\end{enumerate}
By the compactness of $[0,T]\times \clos{B_{\widetilde{R}_n}}$, there exists a finite collection $V_{\hat t_k,\hat x_k,\hat y_k}$, with $k=0,\dots,K$ which is a finite cover of $[0,T]\times \clos{B_{\widetilde{R}_n}}$.

Let us set
\begin{equation}
	R_n:=\max\{\widetilde{R}_n+1,C_{\hat t_0,\hat x_0,\hat y_0},C_{\hat t_1,\hat x_1,\hat y_1},\dots,C_{\hat t_K,\hat x_K,\hat y_K}\}\,.
\end{equation}
We now consider $(\bar t, \bar x, \bar y)$ as in the second point of the claim and show that for this choice of $R_n$ the estimate \eqref{eq:rot_interelastic} holds. Indeed, by \eqref{eq:rot_equiv} we have
\begin{equation}\label{eq:rot_equiv_bis}
	\rho(\bar t;0,x,y)=\widehat{\rho}(T;T-\bar t,-\bar x,\bar y)\,.
\end{equation}
Since $\abs{(-\bar x,\bar y)}<\widetilde{R}_n$ and
\begin{equation*}
	\abs{\widehat\zeta(T;T-\bar t,-\bar x,\bar y)}=\abs{(x,y)}>R_n\,,
\end{equation*}
we deduce that the triplet $(T-\bar t,-\bar x,\bar y)$ is included in case \ref{prop:rot_elast_second} above, hence $\widetilde{\rho}(T;T-\bar t,-\bar x,\bar y)>n+1$ and by \eqref{eq:rot_equiv_bis} we deduce \eqref{eq:rot_interelastic}, concluding the proof.
\end{proof}

Notice that, if we had global existence of solution, Lemma \ref{lemma:rotelast} would follow more easily from Lemma \ref{lemma:stimarot}. More precisely, in the claim within the proof of Lemma~\ref{lemma:rotelast} we could consider only the first option: such simplified version of the claim is sometimes referred as \emph{elastic property} of the dynamics.

We are now ready to complete the proof of the main theorem.

\begin{proof}[Proof of Theorem \ref{th:chall}]
	First of all, notice that the thesis is equivalent to the existence of a $T$-periodic solution for the planar system introduced in the Cauchy problem \eqref{eq:Fcauchy}. For such planar dynamics, we consider the function $\phi$ as in \eqref{eq:Fphidef}; our aim is to find a fixed point of $\phi(T,\cdot)$. 
	
	By \ref{prop:Nneigh} there exists a bounded open set $U_1\subset \R^2$ such that
	\begin{equation}
		0\in\Null\subset U_1 \subset \clos{U_1} \subset U_0\,.
	\end{equation}
	In particular, we notice that $\phi(T,z)\neq\infty$ for every $z\in \partial U_1$, hence $\rot_\phi(z)=\rho(T;0,z)$ is well-defined and finite for every $z=(x,y)\in\partial U_1$.
	By Lemma \ref{lemma:myrotcont} and the compactness of $\partial U_1$, we deduce that there exists $\bar n\in \N$ such that 
	\begin{equation} \label{eq:Frotout}
			\rot_\phi(\partial U_1)<\bar n\,.
	\end{equation}
	We now define
	\begin{equation}
		U=U_1 \cup \left\{z\in \R^2\setminus\Null \,\text{such that $\rho(T;0,z)<\bar n+\displaystyle\frac{1}{2}$}\right\}\,.
	\end{equation}
	By Lemma \ref{lemma:myrotcont} we deduce that $U$ is open and by Lemma \ref{lemma:rotelast} that it is bounded. 
	 Moreover, by Lemma \ref{lemma:myrotcont} and since $\Null\cap \partial U=\emptyset$, we have
	\begin{equation} \label{eq:Frotinn}
		\rot_\phi(\partial U)=\left\{\bar n +\frac{1}{2}\right\}, \qquad\text{hence}\quad \rot_\phi(\partial U)\cap \Z=\emptyset\,.
	\end{equation}
By Theorem \ref{th:deg1} we deduce that $\deg(f_T,U,0)=1$ and, in particular, that $\phi(T,\cdot)$ has a fixed point in $U$, which corresponds to a $T$-periodic solution of~\eqref{eq:Fode}.
\end{proof}

\textbf{Acknowledgements.}
The Author is grateful to A.~Fonda and A.~Boscaggin for the valuable discussions on the topic. The Author is partially supported by the GA\v{C}R Junior Star Grant 21-09732M.

\footnotesize

\end{document}